\theoremstyle{plain}
\newtheorem{thm}{Theorem}[section]
\newtheorem{lem}[thm]{Lemma}
\journal{Computers and Mathematics with Applications}
\begin{document}

\begin{frontmatter}

%% Title, authors and addresses

%% use the tnoteref command within \title for footnotes;
%% use the tnotetext command for the associated footnote;
%% use the fnref command within \author or \address for footnotes;
%% use the fntext command for the associated footnote;
%% use the corref command within \author for corresponding author footnotes;
%% use the cortext command for the associated footnote;
%% use the ead command for the email address,
%% and the form \ead[url] for the home page:
%%
%% \title{Title\tnoteref{label1}}
%% \tnotetext[label1]{}
%% \author{Name\corref{cor1}\fnref{label2}}
%% \ead{email address}
%% \ead[url]{home page}
%% \fntext[label2]{}
%% \cortext[cor1]{}
%% \address{Address\fnref{label3}}
%% \fntext[label3]{}

\title{Discontinuous Galerkin Method for the Air Pollution Model}

%% use optional labels to link authors explicitly to addresses:
%% \author[label1,label2]{<author name>}
%% \address[label1]{<address>}
%% \address[label2]{<address>}

\author[SAP]{Lite Zhao}
\ead{zhaolite@gmail.com}
\author[SCM]{Xijian Wang\corref{cor1}}
\ead{wangxj1980426@gmail.com}
\cortext[cor1]{Corresponding author}
\author[TUE]{Qinzhi Hou}
\ead{q.hou@tue.nl}
\address[SAP]{School of Applied Physics and Materials, Wuyi University, People's Republic of China}
\address[SCM]{School of Mathematics and Computing Science, Wuyi University, People's Republic of China}
\address[TUE]{Department of Mathematics and Computer Science, Eindhoven University of Technology, The Netherlands}
\begin{abstract}
%% Text of abstract
In this paper we present the discontinuous Galerkin method to solve the problem of the two-dimensional air pollution model. The resulting system of ordinary differential equations is called the semidiscrete formulation. We show the existence and uniqueness of the ODE system and provide the error estimates for the numerical error.

\end{abstract}

\begin{keyword}
%% keywords here, in the form: keyword \sep keyword

%% MSC codes here, in the form: \MSC code \sep code
%% or \MSC[2008] code \sep code (2000 is the default)
air pollution model, discontinuous Galerkin method, error estimate

\end{keyword}

\end{frontmatter}

%%
%% Start line numbering here if you want
%%
% \linenumbers

%% main text
\section{Introduction}
\label{}
Air pollution is the introduction of chemicals, particulate matter, or biological materials that cause harm or discomfort to humans or other living organisms, or cause damage to the natural environment or built environment, into the atmosphere. The basic technology for analyzing air pollution is through the mathematical models and numerical methods for predicting the transport of air pollutants in the lower atmosphere\cite{daly2007air, blomcomparison, botchev2003new, dimov2004computational, zlatev1995computer}. Different air pollution models have been developed in the last decades by the National Environmental Research Institute (\url{http://www.dmu.dk/en/air/models/}). In the present paper we consider the following Danish Eulerian model \cite{blomcomparison,dimov2004computational,zlatev1995computer}

\begin{subequations}
\begin{align}\label{Eulerianmodela}
  % \nonumber to remove numbering (before each equation)
  &\frac{{\partial u}}{{\partial t}} +\frac{\partial }{{\partial x}}(cu) + \frac{\partial }{{\partial y}}(eu)-\frac{\partial }{{\partial x}}({k_x}\frac{{\partial u}}{{\partial x}}) - \frac{\partial }{{\partial y}}({k_y}\frac{{\partial u}}{{\partial y}})=f(u) , \\\label{Eulerianmodelb}
  &f(u)= - ({k_1} + {k_2})u+ E + Q(u),\\\label{Eulerianmodel1c}
  &u(x,\;y,\;0) = {u_0}(x,\;y),\;\;(x,\;y) \in \Omega , \\\label{Eulerianmodeld}
  &u(x,\;y,\;t)\left| {_{\partial \Omega }} \right. = 0,\;\;t \in [0,\;T].
\end{align}
\end{subequations}

The different quantities involved in the mathematical model have the following meaning:
\begin{itemize}
  \item the concentration is denoted by $u$;
  \item $c$ and $e$ are wind velocities;
  \item $k_{x}$ and $k_{y}$ are diffusion coefficients;
  \item the emission source is described by $E$;
  \item $k_{1}$ and $k_{2}$ are constant deposition coefficients;
  \item the chemical reaction is denoted by $Q$.
\end{itemize}
Meanwhile, we give the following assumptions:

 \begin{itemize}
  \item $u\in H_{0}^{1}(\Omega)\cap H^{3}(\Omega),\;\; u_{t}, u_{tt}\in L^{2}(\Omega)$;
  \item $Q(u)$ satisfy the Lipschitz condition;
  \item $
   0 < {k_*} \leqslant \min \{ \left| {{k_x}} \right|,\;\left| {{k_y}} \right|\}  \leqslant \max \{ \left| {{k_x}} \right|,\;\left| {{k_y}} \right|\}  \leqslant {k^*},\;0 < {c_*} \leqslant \min \{ \left| c \right|,\;\left| e \right|\}  \leqslant
  \;\;\;\max \{ \left| c \right|,\;\left| e \right|\}  \leqslant {c^*},\;{k_*},\;{k^*},\;{c_*},\;{c^*}
 $are constants.
  \end{itemize}

A general description of the Danish Eulerian Model and its numerical treatment is given in \cite{zlatev1995computer,zlatev2006computational,zlatev1994studying}. Research on the finite difference method and finite volume element method for this air pollution model already has good results \cite{Yuan2000numerical,guo2007numerical,Zhou1997,wang10}. In this article, we use the discontinuous Galerkin method (DG method) to analyse and solve the air pollution model.

DG methods in mathematics form a class of numerical methods for solving partial differential equations. They have recently gained popularity due to many of their attractive properties, refer to \cite{Coc00, Riv08, arnold1982interior, baumann1999discontinuous, Oden:1998:DHF:301787.301788, riviere2000discontinuous, riviere1999improved,rivi¨¨re2002priori, wheeler1980interior}. First of all, the flexibility of the methods allows for general non-conforming meshes with variable degree of approximation. This makes the implementation of h-p adaptivity for DG easier than that for conventional approaches. Moreover, the DG methods are locally mass conservative at the element level. In addition, they have less numerical diffusion than most conventional algorithms, thus are likely to offer more accurate solution for at least advection-dominated transport problems. They handle rough coefficient problems and capture the discontinuity in the solution very well by the nature of discontinuous function space. Furthermore, the DG methods are easier to implement than most traditional finite element methods. The trial and test spaces are easier to construct than conforming methods because they are local.
%For instance, DG methods are simpler to implement than the finite volume method which is another type of element-wise conservative methods. Unlike traditional finite element methods, the DG algorithms need only the mesh information about elements and interfaces, but without the mesh information about edges and vertices.

The paper is organized as follows: In Section 2, the variational formulation of the DG method is stated. And we show the existence and uniqueness of the resulting ordinary differential equations system. Finally we provide the error estimates for the numerical error in Section 3.

%% The Appendices part is started with the command \appendix;
%% appendix sections are then done as normal sections
%% \appendix

\section{Semidiscrete formulation}
In this section, we approximate the solution $u(t)$ by a function $U_{h}(t)$ that belongs to the finite-dimensional space $\mathcal{D}_{k}(\varepsilon_{h})$ for all $t\geq 0$. The solution $U_{h}$ is referred to as the semidiscrete solution. In what follows, we assume that $s>\frac{3}{2}$. We introduce a bilinear form $J_{0}^{\sigma_{0},\beta_{0}}$:
$H^{s}({\varepsilon_{h}})\times H^{s}({\varepsilon_{h}})\rightarrow \mathbb{R} $ that penalize the jump of the function values:
\begin{equation*}
    J_{0}^{\sigma_{0},\beta_{0}}(w, v)=\sum\limits_{e\in \Gamma _{h}\cup\partial{\Omega}}\frac{\sigma^{0}_{e}}{\left| {e} \right|^{\beta_{0}}}\int_{e}{[w][v]}
\end{equation*}
The parameter $\sigma_{e}^{0}$ is called penalty parameter. It is nonnegative real number. The power $\beta_{0}$ is positive number. $\left| {e} \right|$ simply means the length of $e$. We now define the DG bilinear form $a_{\epsilon}:H^{s}({\varepsilon_{h}})\times H^{s}({\varepsilon_{h}})\rightarrow \mathbb{R}$
\begin{align*}
    a_{\epsilon}(w,v)=&\sum\limits_{E\in \varepsilon_{h}}\int_{E}( k_{x}\frac{{\partial w}}{{\partial x}}\frac{\partial v}{\partial x}+k_{y}\frac{{\partial w}}{{\partial y}}\frac{\partial v}{\partial y})-\sum\limits_{e\in \Gamma _{h}}\int_{e}(\{k_{x}\frac{{\partial w}}{{\partial x}}\overrightarrow{n_{1}}\}+\{k_{y}\frac{{\partial w}}{{\partial y}}\overrightarrow{n_{2}}\})[v]\\
      &-\epsilon \sum\limits_{e\in \Gamma _{h}}\int_{e} {(\{k_{x}\frac{{\partial v}}{{\partial x}}\overrightarrow{n_{1}}\}+\{k_{y}\frac{{\partial v}}{{\partial y}}\overrightarrow{n_{2}}\})[w]}+ J_{0}^{\sigma_{0},\beta_{0}}(w, v).\\
\end{align*}
The bilinear form $a_{\epsilon}$ contains another parameter $\epsilon$ that may take the value -1,0, or 1. $a_{\epsilon}$ is symmetric if $\epsilon=-1$ and it is nonsymmetric otherwise.

This bilinear form yields the following energy seminorm:
\begin{equation*}
    {\left\| v \right\|_\varepsilon } = {(\sum\limits_{E \in {\varepsilon _h}} {\left\| {{D^{1/2}}\nabla v} \right\|_{{L^2}(E)}^2}  + \sum\limits_{e\in \Gamma _{h}}\frac{\sigma^{0}_{e}}{\left| {e} \right|^{\beta_{0}}}\left\| [v] \right\|_{{L^2}(e)}^2)^{1/2}}
\end{equation*}
Second, the convection term $\frac{\partial }{{\partial x}}(cu) + \frac{\partial }{{\partial y}}(eu)$ is approximated by an upwind discretization. Let us denote the upwind value of a function $w$ by $w^{up}$. We recall that $\left( {\begin{array}{*{20}{c}}
  {\overrightarrow {{n_1}} } \\
  {\overrightarrow {{n_2}} }
\end{array}} \right)$ is a unit normal vector pointing from $E_{e}^{1}$ to $E_{e}^{2}$:
\[{w^{up}} = \left\{ \begin{gathered}
  w\left| {_{E_e^1}} \right.,\;\;if\;\; c\overrightarrow{n_{1}}+e\overrightarrow{n_{2}}\geq 0\;\; \hfill \\
  w\left| {_{E_e^2}} \right.,\;\;if\;\; c\overrightarrow{n_{1}}+e\overrightarrow{n_{2}}\leq 0\;\; \hfill \\
\end{gathered}  \right.\;\;\;\;\;\;\;\forall e =\partial E^{1}_{e}\cap \partial E^{2}_{e}. \]
Let
\begin{equation*}
    b(c,e;w,v)= -\sum\limits_{E\in \varepsilon_{h}}{\int_{E} {(cw\frac{\partial v}{\partial x} + ew\frac{\partial v}{\partial y})}}  + \sum\limits_{e\in \Gamma _{h}}{\int_{e} {(c\overrightarrow{n_{1}}w^{up}[v] + e\overrightarrow{n_{2}}w^{up}[v])}}
\end{equation*}

The general semidiscrete DG variational formulation of problem (\ref{Eulerianmodela})-(\ref{Eulerianmodeld}) is as follows: Find $U_{h}\in L^{2}(0, T; \mathcal{D}_{k}(\varepsilon_{h}))$, such that
\begin{subequations}
\begin{align}\label{variationalforma}
  % \nonumber to remove numbering (before each equation)
  &\forall t>0, \forall v\in \mathcal{D}_{k}(\varepsilon_{h}), (\frac{\partial U_{h}}{\partial t}, v)_{\Omega}+a_{\epsilon}(U_{h}(t),v)+b(c,e;U_{h}(t),v)=L(U_{h}(t),v) , \\
  \label{variationalformb}
  &\forall v\in \mathcal{D}_{k}(\varepsilon_{h}), (U_{h}(0),v)_{\Omega}=(u_{0},v)_{\Omega},
\end{align}
\end{subequations}
where the form $L$ is
\begin{equation*}
    L(w;v)=\int_{\Omega}f(w)v.
\end{equation*}

The next lemma establishes the consistency between the model problem and the variational formulation.
\begin{lem}
Assume that the weak solution $u$ of problem (\ref{Eulerianmodela})-(\ref{Eulerianmodeld}) belongs to $H^{1}(0, T; H^{2}(\varepsilon_{h}))$, then $u$ satisfies the variational problem (\ref{variationalforma})-(\ref{variationalformb}).
\end{lem}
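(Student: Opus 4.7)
The plan is to test the strong PDE (\ref{Eulerianmodela}) against an arbitrary $v\in\mathcal{D}_k(\varepsilon_h)$, element by element, and then reassemble the pieces so as to recover the DG bilinear forms $a_\epsilon$ and $b$.

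First, I would multiply (\ref{Eulerianmodela}) by $v$, restrict to each $E\in\varepsilon_h$, and integrate. On the diffusion term I apply Green's formula on $E$, producing volume integrals of the form $\int_E k_x \partial_x u\,\partial_x v + k_y \partial_y u\,\partial_y v$ together with boundary integrals $\int_{\partial E}(k_x \partial_x u\, n_1 + k_y \partial_y u\, n_2)v$. On the convection term, applying integration by parts likewise produces $-\int_E(cu\,\partial_x v + eu\,\partial_y v)$ plus edge contributions $\int_{\partial E}(c\,n_1 + e\,n_2)\,u\,v$. Summing over $E\in\varepsilon_h$ and reorganising the edge contributions as sums over interior edges $e\in\Gamma_h$ and boundary edges yields terms in $[v]$ and in the average and jump of the flux and of $u$.

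The key step is to exploit the regularity $u\in H^1(0,T;H^2(\varepsilon_h))$ (hence $u\in H^2(\Omega)$ in particular), together with $u|_{\partial\Omega}=0$. This gives $[u]=0$ on every $e\in\Gamma_h\cup\partial\Omega$, so that $J_0^{\sigma_0,\beta_0}(u,v)=0$ and the $\epsilon$-symmetrisation term $\epsilon\sum_e\int_e(\{k_x\partial_x v\,n_1\}+\{k_y\partial_y v\,n_2\})[u]$ also vanishes. The $H^2$ regularity moreover makes $k_x\partial_x u$ and $k_y\partial_y u$ possess well-defined single-valued normal traces across interior edges, so the sum of the two one-sided flux contributions from neighbouring elements collapses to exactly $\int_e(\{k_x\partial_x u\,n_1\}+\{k_y\partial_y u\,n_2\})[v]$, which is the term appearing in $a_\epsilon(u,v)$. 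Finally, continuity of $u$ across each edge means that $u^{up}=u$ on $e$, so the convection edge contributions become precisely $\sum_{e\in\Gamma_h}\int_e(c\,n_1 + e\,n_2)\,u^{up}[v]$, matching $b(c,e;u,v)$; boundary edges vanish by the Dirichlet condition.

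Collecting everything, the sum equals $(\partial_t u,v)_\Omega + a_\epsilon(u,v) + b(c,e;u,v)$ on the left and $\int_\Omega f(u)v = L(u,v)$ on the right, which is (\ref{variationalforma}); (\ref{variationalformb}) follows from the initial condition (\ref{Eulerianmodel1c}). The main technical obstacle is the careful bookkeeping that turns element-boundary integrals into the single-valued interior-edge integrals of $a_\epsilon$ and $b$, and verifying that every jump term involving $u$ vanishes under the given regularity; once these identifications are made the rest of the argument is routine.
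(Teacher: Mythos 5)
Your proposal is correct and follows essentially the same route as the paper: elementwise multiplication by $v$ and Green's formula, reassembly of the element-boundary integrals into average/jump terms on interior edges, the observation that $[u]=0$ kills the penalty and $\epsilon$-symmetrisation terms, and $u^{up}=u$ for the convective edge terms. Your write-up is in fact more explicit than the paper's about why the flux traces are single-valued and why the boundary edges drop out, but the underlying argument is identical.
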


\begin{proof}
Let $v$ be a test function in $\mathcal{D}_{k}(\varepsilon_{h})$. We multiply by $v|_{E}$ and integrate by parts on one element $E\in \varepsilon_{h}$, and use Green's theorem:
\begin{align*}
       &{(\frac{{\partial u}}{{\partial t}},\;v)_E} - \int_{E} {(-k_{x}\frac{{\partial u}}{{\partial x}}\frac{\partial v}{\partial x}-k_{y}\frac{{\partial u}}{{\partial y}}\frac{\partial v}{\partial y}+cu\frac{\partial v}{\partial x} + eu\frac{\partial v}{\partial y})}  + \\
       &\int_{\partial{E}} {(-k_{x}\frac{{\partial u}}{{\partial x}}\overrightarrow{n_{1}}v-k_{y}\frac{{\partial u}}{{\partial y}}\overrightarrow{n_{2}}v+cu\overrightarrow{n_{1}}v + eu\overrightarrow{n_{2}}v)} = \int_{E} {f(u)v}\\
\end{align*}
Summing over all elements and using the regularity of the exact solution, we obtain
\begin{align*}
       &{(\frac{{\partial u}}{{\partial t}},\;v)_\Omega} -\sum\limits_{E\in \varepsilon_{h}}{\int_{E} {(-k_{x}\frac{{\partial u}}{{\partial x}}\frac{\partial v}{\partial x}-k_{y}\frac{{\partial u}}{{\partial y}}\frac{\partial v}{\partial y}+cu\frac{\partial v}{\partial x} + eu\frac{\partial v}{\partial y})}}  + \\
       & \sum\limits_{e\in \Gamma _{h}}{\int_{e} {(-\{k_{x}\frac{{\partial u}}{{\partial x}}\overrightarrow{n_{1}}\}[v]-\{k_{y}\frac{{\partial u}}{{\partial y}}\overrightarrow{n_{2}}\}[v]+cu\overrightarrow{n_{1}}[v] + eu\overrightarrow{n_{2}}[v])}}+\\
      &\epsilon \sum\limits_{e\in \Gamma _{h}}\int_{e} {(-\{k_{x}\frac{{\partial v}}{{\partial x}}\overrightarrow{n_{1}}\}[u]-\{k_{y}\frac{{\partial v}}{{\partial y}}\overrightarrow{n_{2}}\}[u])}+ \sum\limits_{e\in \Gamma _{h}}\frac{\sigma^{0}_{e}}{\left| {e} \right|^{\beta_{0}}}\int_{e}{([u][v])}= \int_{\Omega} {f(u)v}.\\
\end{align*}
Since $u_{up}=u$, we clearly have our result.
\end{proof}
\subsection{Existence and uniqueness of the solution}
Because of the lack of continuity constraints between mesh elements for the test functions, the basic functions of $\mathcal{D}_{k}(\varepsilon_{h}))$ have a support contained in one element. We write
\begin{equation*}
    \mathcal{D}_{k}(\varepsilon_{h})=span\{ \phi _i^E:\;1 \leqslant i \leqslant {N_{loc}},\;E \in {\varepsilon _h}\}
\end{equation*}
with
\[\phi _i^E(x) = \left\{ \begin{gathered}
  \widetilde {{\phi _i}} \circ {F_E}(x),\;\;\;x \in E, \hfill \\
  0,\;\;\;\;\;\;\;\;\;\;\;\;\;\;x \notin E. \hfill \\
\end{gathered}  \right.\]
In 2D, we have $\widehat{\phi}(\widehat{x},\widehat{y})=\widehat{x}^{I}\widehat{y}^{I}, I+J=i, 0\leq i\leq k$. This yields the local dimension
\begin{equation*}
    N_{loc}=\frac{(k+1)(k+2)}{2}.
\end{equation*}
using the global basis functions, we can expand the semidiscrete solution
\begin{equation}\label{expansion}
    \forall t \in (0,\;T),\;\forall (x,\;y) \in \Omega ,\;{U_h}(t,\;x,\;y) = \sum\limits_{E \in {\varepsilon _h}} {\sum\limits_{i = 1}^{{N_{loc}}} {\xi _i^E(t)\phi _i^E(x,\;y)} }.
\end{equation}
The degrees of freedom $\xi^{E}$'s are functions of time. Let $N_{el}$ denote the number of elements in the mesh. We can rename the basis functions and the degrees of freedom such that
\begin{align*}
    & \{\phi_{i}^{E}:1\leq i \leq N_{loc}, E\in \varepsilon_{h}\}=\{\widetilde{\phi}_{j}:1\leq j \leq N_{loc}N_{el}\},\\
    & \{\xi_{i}^{E}:1\leq i \leq N_{loc}, E\in \varepsilon_{h}\}=\{\widetilde{\xi}_{j}:1\leq j \leq N_{loc}N_{el}\}.
\end{align*}
Plugging (\ref{expansion}) into the variational problem (\ref{variationalforma})-(\ref{variationalformb}) yields a linear system of ordinary differential equations as follows:
\begin{align*}
    & M \frac{d\widetilde{\xi}}{dt}(t)+(A+B)\widetilde{\xi}=G(\widetilde{\xi}),\\
    & M\widetilde{\xi}(0)=\widetilde{U}_{0}.
\end{align*}
The matrices $M, A$ are called the mass and stiffness matrices, and they are defined by
    \[\forall 1 \leqslant i,\;j \leqslant {N_{loc}}{N_{el}},\;\;{M_{ij}} = {(\widetilde {{\phi _j}},\;\widetilde {{\phi _i}})_\Omega },\;\;{A_{ij}} = {a_\epsilon }(\widetilde {{\phi _j}},\;\widetilde {{\phi _i}}).\]

The matrix $B$ results from the convective term, and the vector $G(\widetilde{\xi})$ depends on the vector solution
\begin{align*}
    & \forall 1 \leqslant i,\;j \leqslant {N_{loc}}{N_{el}},\;\;\;{(B)_{ij}} = b(c;\;e;\;{\widetilde \phi _j},\;{\widetilde \phi _i}),\\
    & \forall 1 \leqslant i \leqslant {N_{loc}}{N_{el}},\;\;\;{(G)_i} = L(\widetilde \xi ;\;{\widetilde \phi _i}).
\end{align*}
Since the matrix $M$ is invertible and the vector function $G(\widetilde{\xi})$ is Lipschitz with respect to $\widetilde{\xi}$, there exists a unique solution to the variational problem (\ref{variationalforma})-(\ref{variationalformb}).
\section{Error estimates}
In this section, we first present the Gronwall's inequalities \cite{heywood1990finite}, which are important tools for analyzing time-dependent problems.
\begin{lem} [Continuous Gronwall inquality]\label{Gronwall}
Let $f, g, h$ be piecewise continuous nonnegative functions defined on (a, b). Assume that $g$ is nondereasing. Assume that there is a positive constant $C$ independent of $t$ such that
\begin{equation*}
    \forall t\in(a, b), \; f(t)+h(t)\leq g(t)+C\int_{a}^{t}f(s)ds.
\end{equation*}
Then,
\begin{equation*}
    \forall t\in(a, b), \; f(t)+h(t)\leq e^{C(t-a)}g(t).
\end{equation*}
\end{lem}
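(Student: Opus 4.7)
The plan is to reduce to a scalar linear differential inequality and apply an integrating-factor argument. First I would discard $h(t)$ from the left-hand side using $h \geq 0$, obtaining the simpler inequality
\begin{equation*}
f(t) \leq g(t) + C\int_a^t f(s)\,ds.
\end{equation*}
Introduce the auxiliary function $F(t) = \int_a^t f(s)\,ds$, which is absolutely continuous with $F(a)=0$ and $F'(t) = f(t)$ at every point of continuity of $f$. Substituting the inequality above for $f(t) = F'(t)$ yields
\begin{equation*}
F'(t) - C\,F(t) \leq g(t),
\end{equation*}
valid almost everywhere on $(a,b)$.

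Next I would multiply by the integrating factor $e^{-C(t-a)}$ to rewrite this as
\begin{equation*}
\frac{d}{dt}\bigl( e^{-C(t-a)} F(t) \bigr) \leq g(t)\, e^{-C(t-a)}.
\end{equation*}
Integrating from $a$ to $t$ and using $F(a)=0$ gives $e^{-C(t-a)} F(t) \leq \int_a^t g(s)\,e^{-C(s-a)}\,ds$. Now I would invoke the monotonicity hypothesis on $g$: because $g$ is nondecreasing, $g(s) \leq g(t)$ for $s\in(a,t)$, so
\begin{equation*}
\int_a^t g(s)\,e^{-C(s-a)}\,ds \;\leq\; g(t)\int_a^t e^{-C(s-a)}\,ds \;=\; \frac{g(t)}{C}\bigl(1 - e^{-C(t-a)}\bigr).
\end{equation*}
Multiplying through by $e^{C(t-a)}$ yields $C\,F(t) \leq g(t)\bigl(e^{C(t-a)} - 1\bigr)$. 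Inserting this bound back into the original hypothesis gives
\begin{equation*}
f(t) + h(t) \leq g(t) + C\,F(t) \leq g(t) + g(t)\bigl(e^{C(t-a)} - 1\bigr) = e^{C(t-a)}\,g(t),
\end{equation*}
which is the desired conclusion.

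The main subtlety, rather than a genuine obstacle, is that $f$ is only assumed piecewise continuous, so $F'(t) = f(t)$ holds only off a finite exceptional set; but $F$ is absolutely continuous, so the fundamental theorem of calculus and the integrating-factor manipulation remain valid, and the final inequality holds for every $t$ because both sides are piecewise continuous. The monotonicity of $g$ is essential precisely at the step where $g(s)$ is pulled out of the integral; without it one would obtain a bound involving $\int_a^t g(s)\,ds$ rather than the clean product $e^{C(t-a)} g(t)$.
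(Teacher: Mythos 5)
Your integrating-factor argument is correct: dropping $h\geq 0$, setting $F(t)=\int_a^t f$, deriving $CF(t)\leq g(t)(e^{C(t-a)}-1)$ via monotonicity of $g$, and substituting back into the full hypothesis recovers the stated bound with $h$ reinstated. The paper itself gives no proof of this lemma---it is quoted from the cited reference of Heywood and Rannacher---and your proof is precisely the standard argument for this form of Gronwall's inequality, so there is nothing to reconcile.
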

Now we state a priori error estimates for the semidiscrete scheme \cite{rivi2002non}.
\begin{thm}
Assume that the solution $u$ to problem (\ref{Eulerianmodela})-(\ref{Eulerianmodeld}) belongs to $H^{1}(0, T; H^{2}(\varepsilon_{h}))$ and that $u_{0}$ belongs to $H^{s}(\varepsilon_{h})$ for $s>3/2$. Assume that $\beta_{0}\geq 1$. In the case of SIPG and IIPG, assume that $\sigma_{e}^{0}$ is sufficiently large for all $e$. Then, there is a constant $C$ independent of $h$ such that
\begin{align*}
    &{\left\| {u - {U_h}} \right\|_{{L^\infty }({L^2}(\Omega ))}} + {\left(\int_0^T {\left\| {u(t) - {U_h}(t)} \right\|_\varepsilon ^2dt} \right)^{1/2}}\\
    &\leqslant C{h^{\min (k + 1,\;s) - 1}}({\left\| u \right\|_{{H^1}(0,\;T;\;{H^s}({\varepsilon _h}))}} + {\left\| {{u_0}} \right\|_{{H^s}({\varepsilon _h})}}).
\end{align*}
\end{thm}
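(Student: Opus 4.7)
The plan is to split the error by inserting an approximant $\tilde u(t)\in\mathcal{D}_k(\varepsilon_h)$ of the exact solution, namely write
$$u-U_h=(u-\tilde u)+(\tilde u-U_h)=:\eta+\chi.$$
For $\eta$ I would take $\tilde u$ to be the standard broken $L^2$- or elliptic-type projection into $\mathcal{D}_k(\varepsilon_h)$, for which the usual scaling/trace estimates give
$$\|\eta\|_{L^\infty(L^2)}+\|\eta_t\|_{L^2(L^2)}+\|\eta\|_{L^2(0,T;\|\cdot\|_\varepsilon)} \leq C h^{\min(k+1,s)-1}\|u\|_{H^1(0,T;H^s(\varepsilon_h))}.$$
This already accounts for the first summand in the asserted bound, so the work reduces to controlling $\chi$ in the same norms.

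To handle $\chi$, I would derive an error equation by subtracting the variational formulation (\ref{variationalforma}) from the consistency identity established in the preceding lemma and testing with $v=\chi$. This gives, schematically,
$$\tfrac12\tfrac{d}{dt}\|\chi\|_{L^2(\Omega)}^2+a_\epsilon(\chi,\chi)+b(c,e;\chi,\chi)=-(\eta_t,\chi)_\Omega-a_\epsilon(\eta,\chi)-b(c,e;\eta,\chi)+(f(u)-f(U_h),\chi)_\Omega.$$
On the left, I would exploit coercivity $a_\epsilon(\chi,\chi)\geq\kappa\|\chi\|_\varepsilon^2$, which for SIPG ($\epsilon=-1$) and IIPG ($\epsilon=0$) follows from the hypothesis that $\sigma_e^0$ is sufficiently large via the standard inverse/trace inequality, and for NIPG ($\epsilon=1$) is automatic once $\beta_0\geq 1$. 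I would also use the nonnegativity (up to a zero-order term controlled by the bounds on $c,e$) of the upwind form, $b(c,e;\chi,\chi)\geq -C\|\chi\|_{L^2}^2$, obtained by integration by parts element-wise so that the interior-edge contributions recombine as $\tfrac12\sum_e|c\vec n_1+e\vec n_2|\,[\chi]^2$.

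On the right-hand side, each term is bounded by Cauchy--Schwarz, Young's inequality with a small parameter $\delta$, and the approximation estimates on $\eta$: the diffusive term uses continuity of $a_\epsilon$ in the norm $\|\cdot\|_\varepsilon$ combined with trace bounds for the normal-derivative averages; the convective term uses the $L^\infty$ bounds on $c,e$ together with a jump estimate that is absorbed into the penalty part of $\|\chi\|_\varepsilon^2$; and the reaction term uses the Lipschitz assumption on $Q$ (hence $f$) to get $|(f(u)-f(U_h),\chi)_\Omega|\leq C(\|\eta\|_{L^2}+\|\chi\|_{L^2})\|\chi\|_{L^2}$. After choosing $\delta$ small enough to absorb the $\|\chi\|_\varepsilon^2$ contributions into the coercivity term, I obtain
$$\tfrac{d}{dt}\|\chi\|_{L^2}^2+\tfrac{\kappa}{2}\|\chi\|_\varepsilon^2\leq C\|\chi\|_{L^2}^2+C\bigl(\|\eta_t\|_{L^2}^2+\|\eta\|_\varepsilon^2+\|\eta\|_{L^2}^2\bigr).$$
Integrating in time, using $\|\chi(0)\|_{L^2}\leq Ch^{\min(k+1,s)}\|u_0\|_{H^s(\varepsilon_h)}$ from (\ref{variationalformb}), applying Lemma \ref{Gronwall} with $f(t)=\|\chi(t)\|_{L^2}^2$ and $h(t)=\tfrac{\kappa}{2}\int_0^t\|\chi\|_\varepsilon^2$, and inserting the approximation bounds on $\eta$ yields the required estimate for $\chi$. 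The triangle inequality with the bound on $\eta$ then completes the proof.

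The main obstacle I expect is the combined treatment of the upwind convective form: one must show that the element and interior-edge contributions in $b(c,e;\chi,\chi)$ recombine into a nonnegative jump term even when $c,e$ are not divergence-free, and simultaneously bound $b(c,e;\eta,\chi)$ by quantities that can be swallowed by $\|\chi\|_\varepsilon^2$ and the penalty without corrupting the convergence rate $h^{\min(k+1,s)-1}$. A secondary but delicate point is ensuring that the penalty parameter $\sigma_e^0$ chosen to guarantee coercivity of $a_\epsilon$ does not force hidden $h$-dependence into the constant $C$.
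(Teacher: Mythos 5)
Your proposal follows essentially the same route as the paper: the decomposition $u-U_h=\eta+\chi$ with a projection $\tilde u$, the error equation tested with $v=\chi$, coercivity of $a_\epsilon$, the rewriting of $b(c,e;\chi,\chi)$ as a nonnegative jump term via element-wise integration by parts (the paper simply assumes $\nabla\cdot(c,e)^{T}=0$ here, so the zero-order remainder you worry about does not arise), Cauchy--Schwarz/Young bounds on the $\eta$-terms, the Lipschitz bound on the reaction term, and Gronwall. The only cosmetic difference is that the paper absorbs the upwind edge term into the jump term $\tfrac12\sum_e\int_e|c\overrightarrow{n_1}+e\overrightarrow{n_2}|[\chi]^2$ rather than into the penalty part of $\|\chi\|_\varepsilon^2$, which is the cleaner of the two absorptions you mention.
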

\begin{proof}
We omit some details which is similar to the proof of Theorem 3.4.(\cite{Riv08}). We write $u-U_{h}=\rho-\chi$ with $\rho=u-\widetilde{u}$ and $\chi=U_{h}-\widetilde{u}$. The function $\widetilde{u}\in \mathcal{D}_{k}(\varepsilon_{h}))$ is an approximation of $u$ that satisfies good error bounds.
The error equation is satisfied for all $v$ in  $\mathcal{D}_{k}(\varepsilon_{h})$:
\begin{align*}
    (\frac{\partial \chi}{\partial t}, v)_{\Omega}&+a_{\epsilon}(\chi,v)+b(c,e;\chi,v)=(\frac{\partial \rho}{\partial t}, v)_{\Omega}+a_{\epsilon}(\rho,v)\\
                                                  &+b(c,e;\rho,v)+(f(U_{h})-f(u),v)_{\Omega}.
\end{align*}
Now, by choosing $v=\chi$ and using the coercivity of $a_{\epsilon}$, we obtain
\begin{align*}
   &\frac{1}{2}\frac{d}{{dt}}\left\| {\chi} \right\|_{{L^2}(\Omega )}^2 + \kappa \left\| {\chi} \right\|_\varepsilon ^2 +b(c;e;\chi,\chi) \leqslant (\frac{\partial \rho}{\partial t}, \chi)_{\Omega}\\&+a_{\epsilon}(\rho,\chi)+b(c,e;\rho,\chi)+(f(U_{h})-f(u),\chi)_{\Omega}.
\end{align*}
We use Green's formula and the fact that $\nabla  \cdot \left( {\begin{array}{*{20}{c}}
  c \\
  e
\end{array}} \right)=0$:
\begin{align*}
    \sum\limits_{E \in {\varepsilon _h}} {\int_E {\left( {\begin{array}{*{20}{c}}
  c \\
  e
\end{array}} \right)\chi\cdot\nabla\chi}   }&=\frac{1}{2}\sum\limits_{E \in {\varepsilon _h}} {\int_E {\left( {\begin{array}{*{20}{c}}
  c \\
  e
\end{array}} \right)\cdot\nabla\chi^{2}}}\\
& = \frac{1}{2}\sum\limits_{E \in {\varepsilon _h}} \int_{\partial E} {\left( {\begin{array}{*{20}{c}}
  c \\
  e
\end{array}} \right)\cdot {\left( {\begin{array}{*{20}{c}}
  \overrightarrow{n_{1}} \\
    \overrightarrow{n_{2}}
\end{array}} \right)_{E}\chi^{2}}}\\
&=\frac{1}{2}\sum\limits_{e \in {\Gamma _h}} \int_e (c\overrightarrow{n_{1}}+e\overrightarrow{n_{2}})[\chi^{2}].
\end{align*}
Thus we obtain
\begin{align*}
b(c,e;\chi ,\chi)=&- \sum\limits_{E \in {\varepsilon _h}} {\int_E {\left( {\begin{array}{*{20}{c}}
  c \\
  e
\end{array}} \right)\chi\cdot\nabla\chi}   }+\sum\limits_{e \in {\Gamma _h}} \int_e (c\overrightarrow{n_{1}}+e\overrightarrow{n_{2}})\chi^{up}[\chi]\\
& = \sum\limits_{e \in {\Gamma _h}} \int_e (c\overrightarrow{n_{1}}+e\overrightarrow{n_{2}})(\chi^{up}[\chi]-\frac{1}{2}[\chi^{2}])\\
& = \sum\limits_{e \in {\Gamma _h}} \int_e (c\overrightarrow{n_{1}}+e\overrightarrow{n_{2}})(\chi^{up}[\chi]-\{\chi\}[\chi])\\
&=\frac{1}{2}\sum\limits_{e \in {\Gamma _h}} \int_e |c\overrightarrow{n_{1}}+e\overrightarrow{n_{2}}|[\chi]^{2}\geq 0.
\end{align*}
We now bound each term in $b(c,e;\rho,\chi)$. Using Cauchy-Schwarz's and Young's inequalities, we have
\begin{equation*}
 \sum\limits_{E \in {\varepsilon _h}} {\int_E {\left( {\begin{array}{*{20}{c}}
  c \\
  e
\end{array}} \right)\rho\cdot\nabla\chi}   }\leq C \sum\limits_{E \in {\varepsilon _h}}{\left\| \rho \right\|_{L^{2}(E)}}{\left\| {\nabla \chi} \right\|_{L^{2}(E)}}
\leq \frac{\kappa}{8}{\left\| { \chi} \right\|_{\varepsilon}^{2}}+C{\left\| {\rho} \right\|_{L^{2}(\Omega)}^{2}}
\end{equation*}
and
\begin{align*}
 \sum\limits_{e \in {\Gamma _h}} \int_e (c\overrightarrow{n_{1}}+e\overrightarrow{n_{2}})\chi^{up}[\chi]&\leq {\sum\limits_{e\in \Gamma_{h}} {\left\| {{{\left| {c\overrightarrow {{n_1}}  + e\overrightarrow {{n_2}} } \right|}^{\frac{1}{2}}}[\chi]} \right\|} _{0,\;e}}{\left\| {{{\left| {c\overrightarrow {{n_1}}  + e\overrightarrow {{n_2}} } \right|}^{\frac{1}{2}}}\rho_{*}} \right\|} _{0,\;e}\\&\leq \frac{1}{4}{\sum\limits_{e\in \Gamma_{h}} {\left\| {{{\left| {c\overrightarrow {{n_1}}  + e\overrightarrow {{n_2}} } \right|}^{\frac{1}{2}}}[\chi]} \right\|} _{0,\;e}^{2}}+C\sum\limits_{e\in\Gamma_{h}}{\left\| \rho^{up} \right\|} _{L^{2}(e)}^{2}.
\end{align*}
Finally, we bound the nonlinear source term, using the Lipschitz property:
\begin{equation*}
    \int_{\Omega}(f(U_{h})-f(u))\chi\leq C{\left\| ({U_{h}-u}) \right\|}_{L^{2}(\Omega)} {\left\| \chi \right\|}_{L^{2}(\Omega)} \leq C {\left\| \chi \right\|}_{L^{2}(\Omega)}^{2}+C {\left\| \rho \right\|}^{2}_{L^{2}(\Omega)}.
\end{equation*}
The other terms are identical to the ones in the proof of Theorem 2.13 and 3.4 (\cite{Riv08}). Then the main result is obtained by combining all bounds and using Gronwall's inequality of Lemma \ref{Gronwall}.
\end{proof}

We can choose any of the time discretizations such as backward Euler and forward Euler and some that are of high order such as Crank-Nicolson and Runge-Kutta methods. The analysis of the resulting fully discrete schemes can be done in a common way.

\section*{Acknowledgement}
The second author wishes to thank the financial support from the Erasmus Mundus Scholarship of the European Union during his visiting study in Europe.  
%% \label{}

%% References
%%
%% Following citation commands can be used in the body text:
%% Usage of \cite is as follows:
%%   \cite{key}          ==>>  [#]
%%   \cite[chap. 2]{key} ==>>  [#, chap. 2]
%%   \citet{key}         ==>>  Author [#]

%% References with bibTeX database:

\bibliographystyle{model1-num-names}
\bibliography{Xbib}

%% Authors are advised to submit their bibtex database files. They are
%% requested to list a bibtex style file in the manuscript if they do
%% not want to use model1a-num-names.bst.

%% References without bibTeX database:

% \begin{thebibliography}{00}

%% \bibitem must have the following form:
%%   \bibitem{key}...
%%

% \bibitem{}

% \end{thebibliography}

\end{document}